\DeclareOldFontCommand{\sc}{\normalfont\scshape}{\@nomath\sc}
\DeclarePairedDelimiter{\ceil}{\lceil}{\rceil}
\definecolor{lightgray}{gray}{0.5}
\definecolor{gray}{gray}{0.7}
\definecolor{darkgreen}{rgb}{0,0.6,0.13}
\newcommand{\nc}{\newcommand}
\nc{\dsp}{\displaystyle}
\nc{\txt}{\textstyle}
\nc{\reff}[1]{(\ref{#1})}
\nc{\mrm}[1]{\mathrm{#1}}
\nc{\udl}[1]{\underline{#1}}
\nc{\ovl}[1]{\overline{#1}}
\nc{\al}{\underline{\boldsymbol{\alpha}}}
\nc{\la}{\underline{\boldsymbol{\lambda}}}
\nc{\llbr}{\llbracket}
\nc{\rrbr}{\rrbracket}
\nc{\lbr}{\lbrack}
\nc{\rbr}{\rbrack}
\nc{\N}{\mathbb{N}}
\nc{\Z}{\mathbb{Z}}
\nc{\D}{\mathbb{D}}
\nc{\Q}{\mathbb{Q}}
\nc{\R}{\mathbb{R}}
\nc{\C}{\mathbb{C}}
\nc{\T}{\mathbb{T}}
\nc{\Stwo}{\mathbb{S}^2}
\nc{\tld}[1]{\tilde{#1}}
\nc{\wtld}[1]{\widetilde{#1}}
\nc{\hu}{\hat{u}}
\nc{\wh}[1]{\widehat{#1}}
\nc{\Fbf}{\textbf{F}}
\nc{\Gbf}{\textbf{G}}
\nc{\Lbf}{\textbf{L}}
\nc{\Nbf}{\textbf{N}}
\nc{\Ibf}{\textbf{I}}
\nc{\Dbf}{\textbf{D}} 
\nc{\Tbf}{\textbf{T}}
\nc{\Jbf}{\textbf{J}} 
\nc{\Rbf}{\textbf{R}}   
\nc{\ph}{\varphi}
\nc{\NN}{\mathcal{NN}}
\nc{\OO}{\mathcal{O}}
\nc{\sumeven}{\sum_{k=-N/2}^{N/2}{\hspace{-0.3cm}}'{\;\,}}
\nc{\sumevenk}{\sum_{k=-n/2}^{n/2}{\hspace{-0.3cm}}'{\;\,}}
\nc{\sumevenj}{\sum_{j=-m/2}^{m/2}{\hspace{-0.3cm}}'{\;\,}}
\nc{\sumodd}{\sum_{k=-\frac{N-1}{2}}^{\frac{N-1}{2}}}
\nc{\sumoddl}{\sum_{l=-\frac{N-1}{2}}^{\frac{N-1}{2}}}
\nc{\cqfd}{~\hbox{\vrule width 2.5pt depth 2.5 pt height 3.5 pt}}
\nc{\rr}[1]{\textcolor{red}{#1}}
\nc{\bb}[1]{\textcolor{blue}{#1}}
\nc{\rrr}[1]{\textcolor{red}{\textit{new:}\,\,#1}}
\nc{\rrrr}[1]{\textcolor{red}{\textit{new (simpler):}\,\,#1}}
\nc{\bbb}[1]{\textcolor{blue}{\textit{old:}\,\,#1}}
\nc{\hm}[1]{\textcolor{red}{\textbf{HM:} #1}}
\nc{\hz}[1]{\textcolor{blue}{\textbf{HZ: #1}}}
\nc{\ra}[1]{}
\nc{\bs}[1]{\boldsymbol{#1}}
\nc{\wt}{\widetilde{\times}}
\nc{\wtt}{\widetilde{\varprod}}
\nc{\wpp}{\widetilde{p}}
\nc{\wff}{\widetilde{f}}
\nc{\wTT}{\widetilde{T}}
\nc{\wKK}{\widetilde{K}}
\newtheorem{theorem}{Theorem}[section]
\newtheorem{proposition}[theorem]{Proposition}
\newtheorem{definition}[theorem]{Definition}
\journal{Journal of Computational Mathematics}
\begin{document}

\begin{frontmatter}

\title{Deep ReLU networks overcome the curse of dimensionality for generalized bandlimited functions}

\author[HMQD]{Hadrien Montanelli}
\ead{hadrien.montanelli@gmail.com}

\author[HY]{Haizhao Yang}
\ead{haizhao@purdue.edu}

\author[HMQD]{Qiang Du}
\ead{qd2125@columbia.edu}

\address[HMQD]{Department of Applied Physics and Applied Mathematics, Columbia University, New York, United States}
\address[HY]{Department of Mathematics, Purdue University, Indiana, United States}

\begin{abstract}
We prove a theorem concerning the approximation of generalized bandlimited multivariate functions by deep ReLU networks 
for which the curse of the dimensionality is overcome.
Our theorem is based on a result by Maurey and on the ability of deep ReLU networks to approximate Chebyshev polynomials and
analytic functions efficiently.
\end{abstract}

\begin{keyword}
machine learning \sep deep ReLU networks \sep curse of dimensionality \sep approximation theory \sep bandlimited functions \sep Chebyshev polynomials
\end{keyword}

\end{frontmatter}

\section{Introduction}

The curse of dimensionality is a inevitable issue in high-dimensional scientific computing.
Standard numerical algorithms whose cost is exponential in the dimension $d$ are prohibitive when $d$ is large.
As a mesh-free function parametrization tool, neural networks are believed to be a suitable approach to conquer the curse of dimensionality.
In this paper, we show that deep ReLU networks overcome the curse of dimensionality for \textit{generalized bandlimited functions}, which we shall define at the end of the introduction.
Let us first quickly review what networks are.

\textit{Shallow networks} are approximations $\wff_W$ of multivariate functions $f:\R^d\rightarrow\R$ of the form
\begin{equation}
\wff_W(\boldsymbol{x}) = \sum_{i=1}^W \alpha_i \sigma(\boldsymbol{w}_i\cdot\boldsymbol{x} + \theta_i),
\label{eq:shallow}
\end{equation}
\noindent for some \textit{activation function} $\sigma:\R\rightarrow\R$, weights $\alpha_i,\theta_i\in\R$, $\boldsymbol{w}_i\in\R^d$ and integer $W\geq1$.
Each operation $\sigma(\boldsymbol{w}_i\cdot \boldsymbol{x} + \theta_i)$ is called a \textit{unit}
and the $W$ units in \eqref{eq:shallow} form a \textit{hidden layer}; this is a special form of nonlinear approximation \cite{devore1989, devore1998}. 
\textit{Deep networks} are compositions of shallow networks and have several hidden layers, and each unit of each layer 
performs an operation of the form $\sigma(\boldsymbol{w}\cdot \boldsymbol{x} + \theta)$. 
Following Yarotsky \cite{yarotsky2017}, we allow connections between units in non-neighboring layers.
We define the \textit{depth} $L$ of a network as the number of hidden layers and the \textit{size} $W$ as the total number of units.
In practice, networks with depth $L=\OO(1)$ are considered shallow, while deep networks have typically $L\gg1$ layers.

Before the revolution of deep learning \cite{lecun2015}, most research concerned shallow networks with sigmoid activation functions. 
Nowadays, networks using the \textit{REctifier Linear Unit} (\textit{ReLU}) activation function $\sigma(x)=\max(0,x)$ have become the most popular tool, partly because sigmoid activation functions lead to severe gradient degeneracy during the optimization process. 
It was also shown in \cite{glorot2011} that deep ReLU networks produce sparsity that helps a wide range of machine learning applications; smooth activation functions, including smoothed ReLU functions, do not.
This is why we focus on ReLU networks in this paper.

The theory of approximating functions using shallow networks goes back to 1989 when Cybenko showed that any continuous functions can be approximated by shallow networks \cite{cybenko1989}, while Hornik, Stinchcombe and White proved a similar result for Borel measurable functions \cite{hornik1989}.
In the 1990s, the attention shifted to the {\textit{approximation power}}\footnote{For a real-valued function $f$ in $\R^d$ whose smoothness is characterized by some integer~$m\geq1$, and for some prescribed accuracy $\epsilon>0$, one shows that there exists a shallow network $\wff_W$ of size $W=W(d,m)$ that satisfies $\Vert f - \wff_W \Vert \leq \epsilon$ for some norm $\Vert\cdot\Vert$.}
of shallow networks \cite{mhaskar1993, mhaskar1996, mhaskar1992, pinkus1999}. 
Of particular interest was the absence of the curse of dimensionality in the approximation of functions with fast decaying Fourier coefficients \cite{barron1993}. 

Fast forward to the 2010s and the success of deep networks, one of the most important theoretical problems is to determine why and when deep networks can lessen or break the curse of dimensionality, especially for ReLU networks. One may focus on a particular set of functions which have a very special structure (such as compositional or polynomial), and show that for this particular set deep networks overcome the curse of dimensionality \cite{bach2017, cohen2016, eldan2016, liang2016, montanelli2019c, petersen2018, poggio2017, shaham2018, telgarsky2016}.
Alternatively, one may consider a function space that is more generic for multivariate approximation in high dimensions, such as Korobov spaces \cite{korobov1959}, and prove convergence results for which the curse of dimensionality is lessened \cite{montanelli2019a}.

In this paper, we may consider \textit{generalized bandlimited} functions $f:B=[0,1]^d\rightarrow\R$ of the form
\begin{align}
& f(\bs{x}) = \int_{\R^d}F(\bs{w})K(\bs{w}\cdot\bs{x})d\bs{w}, \quad\mrm{supp}\,F\subset [-M,M]^d,\quad M\geq 1, \label{eq:bandlimited}
\end{align}
for some square-integrable function $F:[-M,M]^d\rightarrow\C$ and analytic kernel $K:\R\rightarrow\C$. 
{This class of functions contains several examples of Reproducing Kernel Hilbert Spaces (RKHSs), including the space of bandlimited functions. The latter are ubiquitous in science and engineering.
In information theory, bandlimited signals are often used for analysis and representation after sampling. 
In scientific computing, after discretization, functions are bandlimited by the Nyquist--Shannon sampling theorem. 
Studying the approximation power of ReLU networks for bandlimited functions is particularly important for neural network-based scientific computing in high dimensions.}
In Section~\ref{sec:theorem}, we shall show that for any measure $\mu$ such functions can be approximated to accuracy $\epsilon$ in the $L^2(B,\mu)$-norm by deep ReLU networks of depth $L=\OO\left(\log_2^2\frac{1}{\epsilon}\right)$ and size $W=\OO\left(\frac{1}{\epsilon^2}\log_2^2\frac{1}{\epsilon}\right)$.

We review some properties of deep ReLU networks in Section \ref{sec:properties}, providing new proofs of existing results (Propositions \ref{prop:multiplication} and \ref{prop:monomial}), as well as presenting new results (Propositions \ref{prop:chebyshev} and \ref{prop:series}, Theorem \ref{thm:analytic}). In Section \ref{sec:theorem}, we recall an existing theorem (Theorem \ref{thm:maurey}), before proving our main theorem (Theorem \ref{thm:bandlimited}).

\section{Approximation properties of deep ReLU networks}\label{sec:properties}

The ability of deep ReLU networks to implement the multiplication of two real numbers with amplitude $M$ was proved by Yarotsky in \cite[Prop.~1]{yarotsky2017}. 
Liang and Srikant proved a similar result for $M=1$ using networks with rectifier linear as well as binary step units in \cite[Thm.~1]{liang2016}.
In the rest of the paper, ``with accuracy $\epsilon$'' or ``bounded'' should be understood in the $L^\infty$-norm, unless stated otherwise.

\begin{proposition}[Multiplication in two dimensions]\label{prop:yarotsky}
For any scalar $M\geq1$, $N\geq1$ and $0<\epsilon<1$, there is a deep ReLU network $\widetilde{\pi}$ with inputs $(x_1,x_2)\in[-M,M]\times[-N,N]$, that has depth
\begin{align*}
L=\OO\left(\log_2\frac{MN}{\epsilon}\right),
\end{align*}
and size
\begin{align*}
W=\OO\left(\log_2\frac{MN}{\epsilon}\right),
\end{align*} 
such that
\begin{align*}
\left\Vert\widetilde{\pi}(x_1,x_2) - x_1x_2\right\Vert_{L^\infty([-M,M]\times[-N,N])}\leq\epsilon.
\end{align*}

Equivalently, if the network has depth $L=\OO\left(\log_2\frac{1}{\epsilon}\right)$ and size $W=\OO\left(\log_2\frac{1}{\epsilon}\right)$, the approximation error satisfies $\left\Vert\widetilde{\pi}(x_1,x_2) - x_1x_2\right\Vert_{L^\infty([-M,M]\times[-N,N])}\leq MN\epsilon$.
\end{proposition}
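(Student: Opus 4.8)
The plan is to reduce bivariate multiplication to a one-dimensional square function via the polarization identity and then to approximate the square function by the sawtooth ReLU construction of Yarotsky \cite{yarotsky2017}. First I would rescale the inputs: with $p = x_1/M + x_2/N$ and $q = x_1/M - x_2/N$, both affine in $(x_1,x_2)$ and both lying in $[-2,2]$ whenever $(x_1,x_2)\in[-M,M]\times[-N,N]$, one has
\begin{equation*}
x_1x_2 = \frac{MN}{4}\left(p^2 - q^2\right).
\end{equation*}
Since $p$ and $q$ are affine functions of the inputs, $\sigma(p)$ and $\sigma(q)$ are legitimate units of a first hidden layer.

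Next I would invoke the square-function construction. Let $g(y) = 2\sigma(y) - 4\sigma(y - \tfrac12) + 2\sigma(y-1)$ be the hat function on $[0,1]$ (a one-layer network of three units), and let $g_s$ be its $s$-fold self-composition, a network of depth $s$ and size $\OO(s)$. The piecewise-linear interpolant $f_m$ of $y\mapsto y^2$ at the dyadic nodes $k2^{-m}$, $0\le k\le 2^m$, obeys the telescoping identity $f_m(y) = y - \sum_{s=1}^m 4^{-s} g_s(y)$ together with $\sup_{y\in[0,1]}|y^2 - f_m(y)| \le 2^{-2m-2}$; hence $f_m$ is realised exactly by a ReLU network of depth and size $\OO(m)$. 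Composing with $|t| = \sigma(t) + \sigma(-t)$ and rescaling, the network $\widetilde{\ph}(t) := 4 f_m(|t|/2)$ has depth and size $\OO(m)$ and satisfies $\Vert \widetilde{\ph}(t) - t^2 \Vert_{L^\infty([-2,2])} \le 4\cdot 2^{-2m-2} = 4^{-m}$.

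I would then set $\wt(x_1,x_2) := \frac{MN}{4}\bigl(\widetilde{\ph}(p) - \widetilde{\ph}(q)\bigr)$, running the two copies of $\widetilde{\ph}$ in parallel so that the depth stays $\OO(m)$ while the size remains $\OO(m)$ (the affine combinations, including the linear terms that the telescoping identity sends directly to the output, are absorbed into the output weights using the connections between non-neighbouring layers our networks allow). By the triangle inequality,
\begin{equation*}
\Vert \wt(x_1,x_2) - x_1x_2 \Vert_{L^\infty([-M,M]\times[-N,N])} \le \frac{MN}{4}\cdot 2\cdot 4^{-m} = \frac{MN}{2}\,4^{-m},
\end{equation*}
so choosing $m = \left\lceil \tfrac12 \log_2 \tfrac{MN}{\epsilon}\right\rceil$ — which is a positive integer because $MN/\epsilon > 1$, and which is $\OO\!\left(\log_2\tfrac{MN}{\epsilon}\right)$ — makes the right-hand side at most $\epsilon$ and yields the claimed depth and size. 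The ``equivalently'' statement follows at once by running this construction with target accuracy $MN\epsilon$ in place of $\epsilon$, since $\log_2\bigl(MN/(MN\epsilon)\bigr) = \log_2(1/\epsilon)$.

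The construction is Yarotsky's, so there is no serious obstacle; the part requiring care is the constant bookkeeping through the two affine rescalings and the polarization step — checking that $p$ and $q$ genuinely stay in $[-2,2]$, the interval on which $\widetilde{\ph}$ is built, and that the accumulated factor $MN$ is exactly controlled by the choice of $m$ so that the error is bounded by $\epsilon$ and not merely $\OO(\epsilon)$ — together with the observation that the two square subnetworks must be placed in parallel, not in series, to keep the depth logarithmic.
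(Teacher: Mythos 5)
Your proposal is correct. The paper does not prove this proposition itself but cites it to Yarotsky \cite[Prop.~1]{yarotsky2017}, and your argument is precisely that construction — the telescoping sawtooth approximation of the square function combined with polarization — with the only novelty being the affine rescaling to the rectangle $[-M,M]\times[-N,N]$, whose bookkeeping (keeping $p,q\in[-2,2]$, the factor $MN/4$, and the choice of $m$) you carry out correctly, including the observation that the two square subnetworks run in parallel and that the ``equivalently'' clause follows by substituting $MN\epsilon$ for $\epsilon$.
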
 

We generalize the proposition of Yarotsky to the $d$-dimensional case.

\begin{proposition}[Multiplication in $d\geq2$ dimensions]\label{prop:multiplication}
For any scalar $M\geq1$ and $0<\epsilon<1$, and any integer $d\geq 2$, 
there is a deep ReLU network $\widetilde{\Pi}$ with inputs $(x_1,\ldots,x_d)\in[-M,M]^d$, 
that has depth 
\begin{align*}
L=\OO\left(d\log_2\frac{d}{\epsilon} + d^2\log_2M\right),
\end{align*}
and size 
\begin{align*}
W=\OO\left(d\log_2\frac{d}{\epsilon} + d^2\log_2M\right),
\end{align*} 
such that
\begin{align*}
\left\Vert\widetilde{\Pi}(x_1,\ldots,x_d) - x_1\ldots x_d\right\Vert_{L^\infty([-M,M]^d)}\leq\epsilon.
\end{align*}
\end{proposition}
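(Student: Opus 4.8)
The plan is to build the $d$-fold product by composing the two-dimensional multiplication network $\wt$ from Prop.~\ref{prop:yarotsky} in a balanced binary-tree fashion, and then to propagate the error and resource bounds carefully through the tree. Concretely, I would group the inputs $x_1,\ldots,x_d$ into pairs and multiply them with $\wt$, obtaining roughly $d/2$ approximate products; then pair those and multiply again; and so on, for about $\lceil\log_2 d\rceil$ levels until a single output remains. At each level I must invoke $\wt$ on inputs whose magnitude may have grown: since $|x_i|\le M$ and the exact products at level $j$ have magnitude at most $M^{2^j}$, I need the $\wt$-networks at level $j$ to accept inputs in $[-R_j,R_j]$ with $R_j$ of order $M^{2^j}$ (plus a margin to absorb the accumulated error, which I will keep below, say, $M^{2^j}$ by choosing the per-level accuracy small enough). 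The total exact product has magnitude at most $M^d$, so the largest amplitude any sub-network must handle is $\OO(M^d)$.

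The key steps, in order, are: (i) set up the binary tree and fix a per-level target accuracy $\epsilon' $; (ii) prove by induction on the level $j$ that the output $y^{(j)}$ of the tree truncated at level $j$ satisfies $\|y^{(j)} - (\text{corresponding exact product})\|_{L^\infty}\le \delta_j$, where $\delta_j$ obeys a recursion of the form $\delta_{j+1}\le R_j\,\delta_j' + (\text{new }\wt\text{ error}) + (\text{amplification of }\delta_j)$ — the standard observation being that $|\wt(a,b)-a'b'|\le |\wt(a,b)-ab| + |ab-a'b'|$ and $|ab-a'b'|\le |a|\,|b-b'| + |b'|\,|a-a'|$, so errors at most double per level up to the controllable $\wt$-error; (iii) choose $\epsilon'$ (equivalently the accuracy parameter fed to each $\wt$) so that after $\lceil\log_2 d\rceil$ levels the total error is at most $\epsilon$ — since errors roughly multiply by the amplitude bounds, one needs each $\wt$ accurate to something like $\epsilon/(d\,M^{d})$ up to constants, i.e.\ $\log_2(1/\epsilon')=\OO(\log_2(d/\epsilon)+d\log_2 M)$; (iv) sum the depths and sizes: there are $\lceil\log_2 d\rceil$ levels, the depths add (composition) so $L=\OO(\log_2 d)\cdot\OO\!\big(\log_2\tfrac{R_j}{\epsilon'}\big)$ with $\log_2 R_j=\OO(d\log_2 M)$, giving $L=\OO\big(\log_2 d\,(\log_2\tfrac d\epsilon + d\log_2 M)\big)$, and the sizes add across all $\OO(d)$ nodes of the tree, each of size $\OO(\log_2\tfrac{R_j}{\epsilon'})=\OO(\log_2\tfrac d\epsilon + d\log_2 M)$, giving $W=\OO\big(d\,(\log_2\tfrac d\epsilon + d\log_2 M)\big)=\OO(d\log_2\tfrac d\epsilon + d^2\log_2 M)$. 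One must also pass untouched inputs forward when $d$ is not a power of $2$ — the ReLU identity $x=\sigma(x)-\sigma(-x)$ handles this at cost $\OO(1)$ per skipped node per level, which does not change the asymptotics; alternatively, using Yarotsky-style skip connections between non-neighboring layers avoids even that overhead.

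The main obstacle I anticipate is bookkeeping the amplitude growth consistently with the error growth: each level both doubles the worst-case magnitude of the exact quantities and (roughly) doubles the accumulated error while adding a fresh $\wt$-error that itself scales with the current amplitude. One has to pick the per-level accuracy parameter once, uniformly, large enough in the logarithm to dominate all $\lceil\log_2 d\rceil$ levels simultaneously — this is where the $d\log_2 M$ and the extra $\log_2 d$ factor (hidden in the stated $d\log_2\frac d\epsilon$) really come from. A secondary subtlety is that Prop.~\ref{prop:yarotsky} is stated for asymmetric input ranges $[-M,M]\times[-N,N]$, which is convenient here since at a given tree node the two incoming approximate products may have been formed from different numbers of original factors; I would feed it the ranges $[-R_{a},R_{a}]\times[-R_{b},R_{b}]$ with $R_a,R_b\le M^d$ and note $\log_2(R_aR_b/\epsilon')=\OO(\log_2\tfrac d\epsilon + d\log_2 M)$, so the bound is uniform over all nodes. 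With these choices the induction closes and the resource counts follow by the additivity of depth under composition and of size over all network nodes. $\cqfd$
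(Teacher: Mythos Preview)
Your proof is correct but follows a genuinely different route from the paper. The paper builds the product \emph{sequentially}: it sets $y_1=\wt(x_1,x_2)$, $y_2=\wt(y_1,x_3)$, \ldots, $y_{d-1}=\wt(y_{d-2},x_d)$, bounds the telescoped error by $dM^d(1+\epsilon_0)^d\epsilon_0$, and takes $\epsilon_0=\epsilon/(dM^de)$. Because the $d-1$ two-input networks are stacked in series, both depth and size come out as $(d-1)\cdot\OO(\log_2\tfrac{1}{\epsilon_0})=\OO(d\log_2\tfrac{d}{\epsilon}+d^2\log_2 M)$. Your balanced binary tree has only $\lceil\log_2 d\rceil$ levels of composition, so your depth bound $\OO\bigl(\log_2 d\cdot(\log_2\tfrac{d}{\epsilon}+d\log_2 M)\bigr)$ is in fact sharper than the paper's (and hence certainly within the stated $\OO$); the size bounds coincide since both constructions use $d-1$ copies of $\wt$ in total. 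The trade-off is that the sequential chain yields, as a free byproduct, every prefix product $x_1\cdots x_k$ for $2\le k\le d-1$ to the same accuracy~$\epsilon$---the paper exploits this immediately after the proof to approximate general polynomials $\sum_{k}c_kx^k$ (Prop.~\ref{prop:monomial}) at no extra cost, whereas your tree produces only dyadic partial products and would require a separate argument there.
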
 

\begin{proof}
Let $M\geq1$ and $0<\epsilon<1$ be two scalars, and $d\geq 2$ an integer. 
For any scalar $A\geq 1$ and $B\geq 1$, let us call $\widetilde{\pi}$ the network of Proposition \ref{prop:yarotsky} that implements the multiplication $xy$, $x\in[-A,A]$, $y\in[-B,B]$, with accuracy $AB\epsilon_0$, for some scalar $0<\epsilon_0<1$ to be determined later.
This network has depth and size $\OO\left(\log_2\frac{1}{\epsilon_0}\right)$.

We construct the network $\widetilde{\Pi}$ that implements the multiplication $x_1x_2\ldots x_d$ as follows,
\begin{align*}
y_1 &= \widetilde{\pi}(x_1, x_2), &\vert y_1\vert &\leq M^2(1+\epsilon_0), \\
y_2 &= \widetilde{\pi}\left(y_1, x_3\right), &\vert y_2\vert &\leq M^3(1+\epsilon_0)^2, \\
y_3 &= \widetilde{\pi}\left(y_2, x_4\right), &\vert y_3\vert &\leq M^4(1+\epsilon_0)^3, \\
& \vdots && \vdots \\
y_{d-1} &= \widetilde{\pi}\left(y_{d-2}, x_d\right), &\vert y_{d-1}\vert &\leq M^{d}(1+\epsilon_0)^{d-1},
\end{align*}
and by setting $\widetilde{\Pi}(x_1,\ldots,x_d) = y_{d-1}$. 

The network $\widetilde{\Pi}$ has accuracy
\begin{align*}
\vert \widetilde{\Pi}(x_1,\ldots,x_d)-x_1\ldots x_d\vert 
& \leq \vert y_{d-1} - y_{d-2}x_d\vert + \vert x_d\vert\vert y_{d-2} - y_{d-3}x_{d-1}\vert + \ldots + \vert x_dx_{d-1}\ldots x_5\vert\vert y_3 - y_2x_4\vert \\
& \quad + \vert x_dx_{d-1}\ldots x_4\vert\vert y_2 - y_1x_3\vert + \vert x_dx_{d-1}\ldots x_3\vert\vert y_1 - x_1x_2\vert, \\
& < M^d(1+\epsilon_0)^{d-2}\epsilon_0 + M^d(1+\epsilon_0)^{d-3}\epsilon_0 + \ldots + M^d(1+\epsilon_0)^2 \\
& \quad + M^d(1+\epsilon_0) + M^d\epsilon_0, \\
& < dM^d(1+\epsilon_0)^d\epsilon_0 \quad \text{(crude estimate)}.
\end{align*}
We choose $\epsilon_0=\epsilon/(dM^de)$ to obtain accuracy $\epsilon$.

The depth and the size of the resulting network are equal to $(d-1)$ times the depth and size of the network defined at the beginning of the proof.
With accuracy $\epsilon_0$ defined above, this gives depth and size
\begin{align*}
\OO\left(d\log_2\frac{dM^de}{\epsilon}\right) = \OO\left(d\log_2\frac{d}{\epsilon} + d^2\log_2M\right).
\end{align*}
The proof is complete.
\end{proof}

The network of Proposition \ref{prop:multiplication} computes $x_1\ldots x_d$ as well as all the intermediate products $x_1\ldots x_k$, $2\leq k\leq d-1$, to the same accuracy $\epsilon$.
This allows us to prove the following result about polynomials, similar to \cite[Thm.~2]{liang2016}.

\begin{proposition}[Polynomials]\label{prop:monomial}
For any scalar $M\geq1$, $C\geq0$ and $0<\epsilon<1$, any integer $n\geq2$, and any polynomial $p_n$ of degree $n$ with input $x\in[-M,M]$ of the form
\begin{align*}
p_n(x)=\sum_{k=0}^nc_kx^k, \quad\underset{0\leq k\leq n}{\mrm{max}}\vert c_k\vert\leq C,
\end{align*}
there is a deep ReLU network $\wpp_n$ with inputs $(x_1,\ldots,x_n)\in[-M,M]^n$, that has depth
\begin{align*}
L=\OO\left(n\log_2\frac{Cn}{\epsilon} + n^2\log_2M\right),
\end{align*}
and size 
\begin{align*}
W=\OO\left(n\log_2\frac{Cn}{\epsilon} + n^2\log_2M\right),
\end{align*}
such that
\begin{align*}
\left\Vert\wpp_n(x,\ldots,x) - p_n(x)\right\Vert_{L^\infty([-M,M])}\leq\epsilon.
\end{align*}
\end{proposition}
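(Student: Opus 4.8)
The plan is to realise $\wpp_n$ as a single affine readout placed on top of the multiplication network $\wtt$ of Prop.~\ref{prop:multiplication}, taking $d=n$. The key is the observation recorded just after that proposition: a single such network, on input $(x_1,\ldots,x_n)$, simultaneously produces approximations of \emph{all} the partial products $x_1\cdots x_k$, $2\le k\le n$, to one common accuracy. Evaluated on the diagonal $x_1=\ldots=x_n=x$ these become approximations of the monomials $x^2,\ldots,x^n$, so one network of depth and size $\OO\!\left(n\log_2(n/\epsilon_0)+n^2\log_2 M\right)$ supplies every monomial we need; building $n$ independent power networks instead would turn the $n$ in the first term into $n^2$, so the reuse is exactly what keeps the stated rate.

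In more detail: assuming $C>0$ (which holds whenever $p_n$ genuinely has degree $n$), fix an internal tolerance $\epsilon_0$ of order $\epsilon/(Cn)$ with $0<\epsilon_0<1$, and apply Prop.~\ref{prop:multiplication} with $d=n$ and accuracy $\epsilon_0$ to obtain a network producing quantities $z_2,\ldots,z_n$ with $|z_k-x_1\cdots x_k|\le\epsilon_0$ on $[-M,M]^n$ (this bound holds for every $k\le n$, not just $k=n$, because the crude estimate in the proof of Prop.~\ref{prop:multiplication} controls each partial product by the same expression). Then define
\begin{align}
\wpp_n(x_1,\ldots,x_n) = c_0 + c_1 x_1 + \sum_{k=2}^n c_k\, z_k(x_1,\ldots,x_k).
\end{align}
Forming this affine combination of already-computed values costs only $\OO(n)$ extra units and $\OO(1)$ extra layers (passing a value through a layer via $z=\sigma(z)-\sigma(-z)$), hence does not change the asymptotic depth or size. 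On the diagonal one gets $|z_k(x,\ldots,x)-x^k|\le\epsilon_0$, so
\begin{align*}
\bigl|\wpp_n(x,\ldots,x)-p_n(x)\bigr| \le \sum_{k=2}^n |c_k|\,\bigl|z_k(x,\ldots,x)-x^k\bigr| \le C(n-1)\epsilon_0 \le \epsilon ,
\end{align*}
and substituting $\epsilon_0\asymp\epsilon/(Cn)$ into the depth/size bound $\OO(n\log_2(n/\epsilon_0)+n^2\log_2 M)$ gives $\OO(n\log_2(Cn^2/\epsilon)+n^2\log_2 M)$, which simplifies to the claimed $\OO(n\log_2(Cn/\epsilon)+n^2\log_2 M)$.

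There is no serious obstacle here; the whole statement is essentially a corollary of Prop.~\ref{prop:multiplication} together with the remark that follows it. The only things requiring care are (i) using the partial products of one multiplication network rather than $n$ separate monomial networks, which is what produces the $n$ (not $n^2$) in the depth and size, and (ii) calibrating $\epsilon_0$ so that, after summing the errors of the $n$ monomials weighted by coefficients bounded by $C$, the total error is still at most $\epsilon$ while the logarithmic overhead from the extra factor $Cn$ is absorbed into the leading term.
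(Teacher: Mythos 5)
Your proposal is correct and follows essentially the same route as the paper: both build a single multiplication network of Prop.~\ref{prop:multiplication} with $d=n$, reuse its intermediate outputs $y_{k-1}(x_1,\ldots,x_k)\approx x_1\cdots x_k$ as the monomial approximations, form the affine combination $c_0+c_1x_1+\sum_{k=2}^n c_k y_{k-1}$, and calibrate the internal accuracy to $\epsilon_0=\epsilon/(Cn)$. Your extra remarks (handling $C>0$, the $\OO(n)$ cost of the readout, and why reusing partial products avoids an $n^2$ factor in the first term) only make explicit what the paper leaves implicit.
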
 

\begin{proof}
Let $M\geq 1$, $C\geq0$ and $0<\epsilon<1$ be three scalars, $n\geq 2$ an integer, and consider a polynomial
\begin{align*}
p_n(x)=\sum_{k=0}^nc_kx^k, \quad\underset{0\leq k\leq n}{\mrm{max}}\vert c_k\vert\leq C.
\end{align*}
We construct $\wpp(x_1,\ldots,x_n)$ as follows,
\begin{align*}
\wpp_n(x_1,\ldots,x_n) = c_0 + c_1x_1 + \sum_{k=2}^nc_ky_{k-1}(x_1,\ldots,x_k),
\end{align*}
where $y_{k-1}(x_1,\ldots,x_k)$ approximates $x_1\ldots x_k$ with the network of Proposition \ref{prop:multiplication} to accuracy $0<\epsilon_0<1$ to be determined later.
(Note that when the inputs are the same $y_{k-1}(x,\ldots,x)$ approximates $x^k$.)

The network $\wpp_n$ has accuracy
\begin{align*}
\vert\wpp_n(x,\ldots,x) - p_n(x)\vert 
& \leq C\sum_{k=2}^n\vert y_{k-1}(x,\ldots,x) - x^k\vert < nC\epsilon_0.
\end{align*}
We choose $\epsilon_0=\epsilon/(Cn)$ to obtain accuracy $\epsilon$.

The resulting network has depth and size
\begin{align*}
\OO\left(n\log_2\frac{Cn^2M^n}{\epsilon}\right) = \OO\left(n\log_2\frac{Cn}{\epsilon} + n^2\log_2M\right).
\end{align*}
The proof is complete.
\end{proof}

The Chebyshev polynomials of the first kind play a central role in approximation theory \cite{trefethen2013}. 
They are defined on the interval $[-1,1]$ via the three-term recurrence relation
\begin{align}
T_n(x) = 2xT_{n-1}(x) - T_{n-2}(x), \quad n\geq 2,
\label{eq:recurrence}
\end{align}
with $T_0=1$ and $T_1(x) = x$.
We show next how deep ReLU networks can efficiently implement Chebyshev polynomials, using the recurrence \eqref{eq:recurrence}.

\begin{proposition}[Chebyshev polynomials]\label{prop:chebyshev}
For any scalar $0<\epsilon<1$, any integer $n\geq2$ and any Chebyshev polynomial $T_n$ of degree $n$ with input $x\in[-1,1]$, there is a deep ReLU network $\wTT_n$ with inputs $(x_1,\ldots,x_n)\in[-1,1]^n$, that has depth 
\begin{align*}
L=\OO\left(n\log_2\frac{n}{\epsilon} + n^2\right),
\end{align*}
and size 
\begin{align*}
W=\OO\left(n\log_2\frac{n}{\epsilon} + n^2\right),
\end{align*}
such that
\begin{align*}
\left\Vert\wTT_n(x,\ldots,x) - T_n(x)\right\Vert_{L^\infty([-1,1])}\leq\epsilon.
\end{align*}
\end{proposition}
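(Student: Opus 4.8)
The plan is to realise $\wTT_n$ directly from the three-term recurrence~\eqref{eq:recurrence}, replacing each exact multiplication by $x$ with the two-dimensional multiplication network $\wt$ of Prop.~\ref{prop:yarotsky}. Set $\wTT_0 = 1$, $\wTT_1 = x_1$ (the remaining inputs $x_2,\dots,x_n$ are left unused), and for $2\le k\le n$ define
\[
\wTT_k \;=\; 2\,\wt(x_1,\wTT_{k-1}) \;-\; \wTT_{k-2},
\]
where $\wt$ is taken with input box $[-1,1]\times[-2,2]$ and accuracy parameter $\epsilon_0\in(0,1)$ to be fixed at the end, so that each copy of $\wt$ has depth and size $\OO(\log_2(1/\epsilon_0))$. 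Since the paper follows Yarotsky's convention of allowing connections between non-neighbouring layers, the input $x_1$ and the already-computed value $\wTT_{k-2}$ can be carried forward to the layer where $\wTT_k$ is assembled at the cost of only $\OO(1)$ extra units per step; thus the whole construction is, up to $\OO(n)$ bookkeeping units (the scalings by $2$, the subtractions, the pass-through wires), the sequential composition of $n-1$ copies of $\wt$.

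Next I would track the propagated error $e_k := \|\wTT_k(x,\dots,x) - T_k(x)\|_{L^\infty([-1,1])}$. Subtracting $T_k = 2xT_{k-1} - T_{k-2}$, inserting $x\wTT_{k-1}$, and using $\|x\|_\infty\le1$ and $|T_{k-1}|\le1$ gives
\[
e_k \;\le\; 2\big(\|\wt(x,\wTT_{k-1}) - x\wTT_{k-1}\|_\infty + e_{k-1}\big) + e_{k-2} \;\le\; 2e_{k-1} + e_{k-2} + c\,\epsilon_0,
\]
with $e_0=e_1=0$ and an absolute constant $c$. Applying $\wt$ here requires the bootstrap fact $\|\wTT_{k-1}\|_\infty\le2$, which will hold because the $e_k$ turn out to be $<1$. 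Comparing with the homogeneous Pell-type recurrence $g_k = 2g_{k-1}+g_{k-2}$, whose characteristic root is $1+\sqrt2$, yields $e_k \le K\,(1+\sqrt2)^k\,\epsilon_0$ for an absolute constant $K\ge1$, uniformly in $k\le n$.

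It then remains to choose $\epsilon_0 = \epsilon\big/\big(K(1+\sqrt2)^n\big)$: this is $<1$ since $\epsilon<1$, it forces $e_k \le K(1+\sqrt2)^n\epsilon_0 = \epsilon < 1$ for every $k\le n$ (closing the bootstrap and giving the desired accuracy $e_n\le\epsilon$), and it gives $\log_2(1/\epsilon_0) = \OO(\log_2(1/\epsilon) + n)$. Hence, by Prop.~\ref{prop:yarotsky}, each of the $n-1$ multiplication blocks has depth and size $\OO(\log_2(1/\epsilon)+n)$; summing over the blocks (depths add, sizes add) and absorbing the $\OO(n)$ bookkeeping units, the total depth and size are $\OO\big(n\log_2(1/\epsilon) + n^2\big) = \OO\big(n\log_2\frac n\epsilon + n^2\big)$, using $n\log_2 n \le n^2$.

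The main obstacle is the error analysis, not the architecture. In contrast with Prop.~\ref{prop:monomial}, where the errors of the individual monomial approximations merely add, here each step of the recurrence roughly doubles the incoming error, so $e_n$ grows geometrically in $n$; one therefore pays an extra additive $\OO(n)$ inside $\log_2(1/\epsilon_0)$, and it is precisely this that generates the $n^2$ term in the depth and size. The only other delicate point is making the boundedness bootstrap rigorous, i.e.\ checking that the approximate Chebyshev values never leave the box $[-2,2]$ on which $\wt$ is built; this is immediate once one notes that the bound $K(1+\sqrt2)^k\epsilon_0$ is increasing in $k$ and equals $\epsilon<1$ at $k=n$.
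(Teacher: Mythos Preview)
Your proof is correct and follows essentially the same route as the paper: implement the three-term recurrence with the two-input multiplication network of Prop.~\ref{prop:yarotsky}, show that the error $e_k$ grows geometrically in $k$, and absorb the resulting exponential into $\epsilon_0$ to produce the $n^2$ term. The only cosmetic difference is bookkeeping: the paper uses the relative-error form of Prop.~\ref{prop:yarotsky} and lets a crude bound $|\wTT_k|<3^{k-2}(1+\epsilon_0)^k$ grow (yielding $\epsilon_0=\epsilon/(n4^ne)$), whereas you fix the box $[-1,1]\times[-2,2]$ and close the bootstrap $|\wTT_{k-1}|\le 2$ inductively, obtaining the slightly sharper base $1+\sqrt2$.
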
 

\begin{proof}
Let $0<\epsilon<1$ be a scalar and $n\geq2$ be an integer.
For any scalar $A\geq 1$ and $B\geq 1$, let us call $\widetilde{\pi}$ the network of Proposition \eqref{prop:yarotsky} that implements the multiplication $xy$, $x\in[-A,A]$, $y\in[-B,B]$, with accuracy $AB\epsilon_0$ for some scalar $0<\epsilon_0<1$ to be determined later.
This network has depth and size $\OO\left(\log_2\frac{1}{\epsilon_0}\right)$.

We construct the network $\wTT_n$ that approximates $T_n(x)$ as follows,
\begin{align*}
\wTT_0 & = 1, & \vert\wTT_0\vert & \leq1, \\
\wTT_1(x) & = x, & \vert\wTT_1\vert & \leq1, \\
\wTT_2(x,x) & = 2\widetilde{\pi}(x,\wTT_1) - \wTT_0, & \vert\wTT_2\vert & <(1+\epsilon_0)^2, \\
\wTT_3(x,x,x) & = 2\widetilde{\pi}(x,\wTT_2) - \wTT_1, & \vert\wTT_3\vert & <3(1+\epsilon_0)^3, \\
& \vdots && \vdots \\
\wTT_n(x,\ldots,x) & = 2\widetilde{\pi}(x,\wTT_{n-1}) - \wTT_{n-2}, & \vert\wTT_n\vert & <3^{n-2}(1+\epsilon_0)^n.
\end{align*}

Let us now estimate the accuracy $e_n$ of the network $\wTT_n(x,\ldots,x)$, 
where $e_n=\vert\wTT_n(x,\ldots,x) - T_n(x)\vert$. We have
\begin{align*}
e_n  & = \vert2\widetilde{\pi}(x,\wTT_{n-1}) - \wTT_{n-2} - 2xT_{n-1} + T_{n-2}\vert, \\
& \leq 2\vert\widetilde{\pi}(x, \wTT_{n-1}) - x\wTT_{n-1}\vert + 2\vert x\vert\vert\wTT_{n-1}- T_{n-1}\vert + e_{n-2}, \\
& \leq 2\epsilon_0\vert \wTT_{n-1}\vert\ + 2e_{n-1} + e_{n-2}, \\
& < 2\epsilon_03^{n-3}(1+\epsilon_0)^{n-1} + 2e_{n-1} + e_{n-2}, \\
& < n4^n(1+\epsilon_0)^n\epsilon_0 \quad \text{(crude estimate)}.
\end{align*}
We choose $\epsilon_0=\epsilon/(n4^ne)$ to obtain accuracy $\epsilon$.

The depth and the size of the resulting network are equal to $(n+1)$ times the depth and size of the network defined at the beginning of the proof.
With accuracy $\epsilon_0$ defined above, this gives depth and size
\begin{align*}
\OO\left(n\log_2\frac{n4^ne}{\epsilon}\right) = \OO\left(n\log_2\frac{n}{\epsilon} + n^2\right).
\end{align*}
The proof is complete.
\end{proof}

Note that we could have proven Proposition \ref{prop:chebyshev} using Proposition \ref{prop:monomial} and an estimate for the size $C$ of the coefficients of the expansion of $T_n$ in the monomial basis. (The leading term of $T_n$ grows like $2^{n-1}$, while the other terms grow at most like $c^n$, for some $c<4$.)

Since Proposition \ref{prop:chebyshev} implements $T_n$, as well as the intermediate $T_k$'s, $0\leq k\leq n-1$, to the same accuracy $\epsilon$, we have the following result about truncated Chebyshev series.

\begin{proposition}[Truncated Chebyshev series]\label{prop:series}
For any scalar $C>0$ and $0<\epsilon<1$, any integer $n\geq2$, and any truncated Chebyshev series $f_n$ of degree $n$ with input $x\in[-1,1]$, {real} coefficients $c_k$'s, $1\leq k \leq n$, of the form
\begin{align*}
f_n(x)=\sum_{k=0}^nc_kT_k(x), \quad\underset{0\leq k\leq n}{\mrm{max}}\vert c_k\vert\leq C,
\end{align*}
there is a deep ReLU network $\wff_n$ with inputs $(x_1,\ldots,x_n)\in[-1,1]^n$, that has depth 
\begin{align*}
L=\OO\left(n\log_2\frac{Cn}{\epsilon} + n^2\right),
\end{align*} 
and size 
\begin{align*}
W=\OO\left(n\log_2\frac{Cn}{\epsilon} + n^2\right),
\end{align*}
such that
\begin{align*}
\left\Vert\wff_n(x,\ldots,x) - f_n(x)\right\Vert_{L^\infty([-1,1])}\leq\epsilon.
\end{align*}
\end{proposition}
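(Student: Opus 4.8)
The plan is to mimic the proof of Proposition~\ref{prop:chebyshev} almost verbatim, since the only new ingredient is taking a linear combination of the Chebyshev polynomials that are already produced as intermediate outputs. First I would invoke Prop.~\ref{prop:chebyshev}: for accuracy $\epsilon_0$ (to be fixed later) there is a deep ReLU network of depth and size $\OO\!\left(n\log_2\frac{n}{\epsilon_0}+n^2\right)$ that outputs approximations $\wTT_0,\wTT_1(x),\dots,\wTT_n(x,\ldots,x)$ to $T_0,T_1,\dots,T_n$ respectively, each to accuracy $\epsilon_0$ in the $L^\infty([-1,1])$-norm. I would then define
\begin{align}
\wff_n(x_1,\ldots,x_n)=\sum_{k=0}^n c_k\,\wTT_k(x_1,\ldots,x_k),
\end{align}
which is just one extra output unit taking a weighted sum of existing units, so it changes the depth by $\OO(1)$ and the size by $\OO(1)$.

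Next I would bound the error. By the triangle inequality,
\begin{align}
\bigl|\wff_n(x,\ldots,x)-f_n(x)\bigr|\le\sum_{k=0}^n|c_k|\,\bigl|\wTT_k(x,\ldots,x)-T_k(x)\bigr|\le C(n+1)\epsilon_0,
\end{align}
using $|c_k|\le C$ and the per-term accuracy $\epsilon_0$ (and noting $\wTT_0=T_0$, $\wTT_1=T_1$ exactly). Choosing $\epsilon_0=\epsilon/(C(n+1))$ — or the slightly cruder $\epsilon_0=\epsilon/(2Cn)$ — yields the desired accuracy $\epsilon$.

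Finally I would substitute this choice of $\epsilon_0$ back into the depth and size estimate from Prop.~\ref{prop:chebyshev}:
\begin{align}
\OO\!\left(n\log_2\frac{Cn^2}{\epsilon}+n^2\right)=\OO\!\left(n\log_2\frac{Cn}{\epsilon}+n^2\right),
\end{align}
since $\log_2(Cn^2/\epsilon)=\log_2(Cn/\epsilon)+\log_2 n$ and the $n\log_2 n$ term is absorbed into $n^2$. This matches the claimed bounds. I do not anticipate a genuine obstacle here; the proof is a routine adaptation of Prop.~\ref{prop:monomial}'s argument (a bounded linear combination of accurately computed basis functions) to the Chebyshev basis, the key point being simply that Prop.~\ref{prop:chebyshev} delivers all the intermediate $T_k$'s simultaneously to a common accuracy. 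The only thing to be mildly careful about is bookkeeping the constant in $\epsilon_0$ and confirming that the extra summation layer does not inflate the asymptotic depth or size — both of which are immediate.
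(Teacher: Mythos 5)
Your proposal is correct and follows essentially the same route as the paper: both form the linear combination $\sum_k c_k \wTT_k$ of the intermediate Chebyshev approximants delivered by Prop.~\ref{prop:chebyshev} at a common accuracy $\epsilon_0$, bound the error by $\OO(Cn\epsilon_0)$, and set $\epsilon_0 = \Theta(\epsilon/(Cn))$, absorbing the resulting $n\log_2 n$ term into $n^2$. The only cosmetic difference is that the paper writes the $k=0,1$ terms as $c_0 + c_1 x_1$ explicitly rather than as $c_0\wTT_0 + c_1\wTT_1$, which is immaterial since those are exact.
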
 

\begin{proof}
Let $C\geq0$ and $0<\epsilon<1$ be two scalars, $n\geq 2$ an integer. Consider a truncated Chebyshev series of the form
\begin{align*}
f_n(x)=\sum_{k=0}^nc_kT_k(x), \quad\underset{0\leq k\leq n}{\mrm{max}}\vert c_k\vert\leq C,
\end{align*}
for some real coefficients $c_k$'s, $1\leq k\leq n$.

We construct $\wff_n$ as follows,
\begin{align}
\wff_n(x_1,\ldots,x_n) = c_0 + c_1x_1 + \sum_{k=2}^nc_k\wTT_k(x_1,\ldots,x_k),
\end{align}
where $\wTT_k$ approximates $T_k$ with the network of Proposition \ref{prop:chebyshev} to accuracy $0<\epsilon_0<1$ to be determined later.

The network $\wff_n$ has accuracy
\begin{align*}
\vert\wff_n(x,\ldots,x) - f_n(x)\vert 
& \leq C\sum_{k=2}^n\vert \wTT_k - T_k\vert < nC\epsilon_0.
\end{align*}
We choose $\epsilon_0=\epsilon/(Cn)$ to obtain accuracy $\epsilon$.

The resulting network has depth
\begin{align*}
\OO\left(n\log_2\frac{Cn^2}{\epsilon} + n^2\right) = \OO\left(n\log_2\frac{Cn}{\epsilon} + n^2\right),
\end{align*}
and size
\begin{align*}
\OO\left(n\log_2\frac{Cn^2}{\epsilon} + n^2\right) = \OO\left(n\log_2\frac{Cn}{\epsilon} + n^2\right).
\end{align*}
The proof is complete.
\end{proof}

Chebyshev series lie at the heart of approximation theory.
In particular, it is possible to show that Lipschitz continuous functions $f$ with input $x\in[-M,M]$ have a unique absolutely and uniformly convergent Chebyshev series, and we write $f(x)=\sum_{k=0}^{\infty}c_kT_k(x/M)$~\cite[Thm.~3.1]{trefethen2013}. For analytic functions, the truncated Chebyshev series defined as $f_n(x)=\sum_{k=0}^{n}c_kT_k(x/M)$ are \textit{exponentially accurate} approximations \cite[Thm.~8.2]{trefethen2013}.

More precisely, for some scalars $M\geq1$ and $s>1$, let us define
\begin{align*}
a_s^M=M\frac{s+s^{-1}}{2}, \quad b_s^M = M\frac{s-s^{-1}}{2},
\end{align*}
and the \textit{Bernstein $s$-ellipse scaled to $[-M,M]$},
\begin{align*}
E_s^M = \left\{x+iy\in\C\,:\,\frac{x^2}{(a_s^M)^2}+\frac{y^2}{(b_s^M)^2}=1\right\}.
\end{align*}
(It has foci $\sqrt{(a_r^M)^2-(b_r^M)^2}=\pm M$, semi-major axis $a_s^M$ and semi-minor axis $b_s^M$.)
If a function $f$ is analytic on the interval $[-M,M]$, and analytically continuable to the ellipse $E_s^M$, where it satisfies $\vert f(x)\vert<C_f$, for some $C_f>0$, then, for each $n\geq0$, the truncated Chebyshev series $f_n$ satisfies
\begin{align}
\left\Vert f_n - f\right\Vert_{L^\infty([-M,M])} \leq \frac{2C_fs^{-n}}{s-1}.
\label{eq:trefethen}
\end{align}

Using Proposition \ref{prop:series} and Equation \eqref{eq:trefethen}, we prove a result about the approximation of analytic functions by deep ReLU networks. 

\begin{theorem}[Deep networks for analytic functions]\label{thm:analytic}
For any scalar $M\geq1$, $s>1$, $C_f>0$ and $0<\epsilon<1$, and any {real-valued} analytic function $f$ with input $x\in[-M,M]$ that is analytically continuable to the open ellipse $E_s^M$, where it satisfies $\vert f(x)\vert\leq C_f$, there is a deep ReLU network $\wff_n$ with inputs $(x_1,\ldots,x_n)\in[-M,M]^n$, that has depth 
\begin{align*}
L=\OO\left(\frac{1}{\log_2^2s}\log_2^2\frac{C_f}{\epsilon}\right),
\end{align*}
and size 
\begin{align*}
W=\OO\left(\frac{1}{\log_2^2s}\log_2^2\frac{C_f}{\epsilon}\right),
\end{align*}
such that
\begin{align*}
\left\Vert\wff_n(x,\ldots,x) - f(x)\right\Vert_{L^\infty([-M,M])}\leq\epsilon.
\end{align*}
\end{theorem}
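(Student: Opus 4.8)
The plan is to approximate $f$ by its truncated (scaled) Chebyshev series $f_n(x)=\sum_{k=0}^{n}c_k T_k(x/M)$, to approximate that series by the network of Proposition~\ref{prop:series}, and to balance the two sources of error by the triangle inequality: in the $L^\infty([-M,M])$ norm,
\begin{align*}
\Vert\wff_n(x,\ldots,x)-f(x)\Vert &\leq \Vert\wff_n(x,\ldots,x)-f_n(x)\Vert\\
&\quad+\Vert f_n(x)-f(x)\Vert,
\end{align*}
and I will force each term on the right to be at most $\epsilon/2$.

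First I would fix the truncation degree $n$. By the exponential convergence estimate~\eqref{eq:trefethen} one has $\Vert f_n-f\Vert_{L^\infty([-M,M])}\leq 2C_f s^{-n}/(s-1)$, so the choice
\[
n=\max\!\left\{2,\ \left\lceil\tfrac{1}{\log_2 s}\log_2\tfrac{4C_f}{(s-1)\epsilon}\right\rceil\right\}=\OO\!\left(\tfrac{1}{\log_2 s}\log_2\tfrac{C_f}{\epsilon}\right)
\]
makes the truncation term at most $\epsilon/2$; the lower bound $n\geq 2$ required by Proposition~\ref{prop:series} is immaterial to the asymptotics.

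Next I need a bound $C$ on $\max_{0\leq k\leq n}\vert c_k\vert$ that does not depend on $n$ or $\epsilon$. This is the one point requiring a little care, and it follows either from the standard coefficient estimate $\vert c_0\vert\leq C_f$, $\vert c_k\vert\leq 2C_f s^{-k}$ (\cite[Thm.~8.1]{trefethen2013}), or directly from~\eqref{eq:trefethen}: for $k\geq1$, $c_kT_k(x/M)=f_k(x)-f_{k-1}(x)$, whence $\vert c_k\vert\leq\Vert f_k-f\Vert+\Vert f-f_{k-1}\Vert\leq 2C_f(1+s)/(s-1)$; in either case $C=\OO(C_f)$. The network of Proposition~\ref{prop:series} applied with this $C$ and target accuracy $\epsilon/2$ approximates $\sum_{k=0}^n c_k T_k(\cdot)$ on $[-1,1]^n$; composing it with the per-coordinate rescaling $x_i\mapsto x_i/M$ — absorbed into the weights of the first hidden layer — yields a deep ReLU network $\wff_n$ on $[-M,M]^n$ with $\Vert\wff_n(x,\ldots,x)-f_n(x)\Vert_{L^\infty([-M,M])}\leq\epsilon/2$ of depth and size $\OO\!\left(n\log_2\tfrac{Cn}{\epsilon}+n^2\right)$.

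It remains to collect the estimates. The triangle inequality above gives overall accuracy $\epsilon$. Since $C=\OO(C_f)$ and $n=\OO\!\left(\tfrac{1}{\log_2 s}\log_2\tfrac{C_f}{\epsilon}\right)$, one has $\log_2\tfrac{Cn}{\epsilon}=\OO\!\left(\log_2\tfrac{C_f}{\epsilon}\right)$ — the $\log_2 n$ term and the $s$-dependent constants being of lower order — so the depth and size are $\OO\!\left(n^2+n\log_2\tfrac{C_f}{\epsilon}\right)=\OO\!\left(\tfrac{1}{\log_2^2 s}\log_2^2\tfrac{C_f}{\epsilon}\right)$, the dominant contribution coming from the $n^2$ term of Proposition~\ref{prop:series}. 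I expect the only genuinely delicate points to be this last $\OO$-collection, in particular keeping track of the dependence on $s$, and securing the $n$- and $\epsilon$-independent coefficient bound; the rest is a routine assembly of Proposition~\ref{prop:series} and~\eqref{eq:trefethen}.
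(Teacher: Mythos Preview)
Your proposal is correct and follows essentially the same route as the paper: truncate to a scaled Chebyshev series with $n=\OO\!\left(\tfrac{1}{\log_2 s}\log_2\tfrac{C_f}{\epsilon}\right)$ via~\eqref{eq:trefethen}, bound the coefficients by \cite[Thm.~8.1]{trefethen2013}, feed the series into Proposition~\ref{prop:series} after the rescaling $x\mapsto x/M$, and let the $n^2$ term dominate. Your treatment is in fact slightly more careful than the paper's in enforcing $n\geq2$ and in offering an alternative derivation of the coefficient bound directly from~\eqref{eq:trefethen}.
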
 

\begin{proof}
Let $M\geq1$, $s>1$, $C_f>0$ and $0<\epsilon<1$ be four scalars, and $f$ be an analytic function defined on $[-M,M]$ that is analytically continuable to the open Bernstein $s$-ellipse $E_{s}^M$, where it satisfies $\vert f(x)\vert\leq C_f$. We first approximate $f$ by a truncated Chebyshev series $f_n$, and then approximate $f_n$ by a deep ReLU network $\wff_n$ using Proposition \ref{prop:series}.

Since $f$ is analytic in the open Bernstein $s$-ellipse $E_s^M$ then, for any integer $n\geq2$,
\begin{align*}
\left\Vert f_n(x)-f(x)\right\Vert_{L^\infty([-M,M])}\leq\frac{2C_fs^{-n}}{s-1} = \OO\left(C_fs^{-n}\right).
\end{align*}
Therefore, if we take $n=\OO\left(\frac{1}{\log_2s}\log_2\frac{2C_f}{\epsilon}\right)$, then the above term is bounded by $\epsilon/2$.

Let us now approximate $f_n$ by a deep ReLU network $\wff_n$. We first write
\begin{align*}
f_n(x) = \sum_{k=0}^nc_kT_k\left(\frac{x}{M}\right), 
\end{align*}
with
\begin{align}
\underset{0\leq k\leq n}{\mrm{max}}\vert c_k\vert = \OO\left(C_fs\right),\;\text{via \cite[Thm.~8.1]{trefethen2013}}.
\label{eq:coefficiens}
\end{align}
We then define our network $\wff_n$ as in Proposition \ref{prop:series}, with extra scaling $x/M$, and such that
\begin{align*}
\vert\wff_n(x,\ldots,x) - f_n(x)\vert\leq\frac{\epsilon}{2}.
\end{align*}
This yields
\begin{align*}
\vert\wff_n(x,\ldots,x) - f(x)\vert 
& \leq \vert\wff_n(x,\ldots,x) - f_n(x)\vert + \vert f_n(x) - f(x)\vert \leq \frac{\epsilon}{2} + \frac{\epsilon}{2} = \epsilon.
\end{align*}

To compute the depth and the size of the resulting network, we note that (i) the extra scaling $x/M$ adds a layer and increases the size by $\OO(n)$, (ii) the coefficients satisfy Equation \eqref{eq:coefficiens}, and (iii) the truncated series was computed to accuracy $\epsilon/2$. Therefore, the network $\wff_n(x,\ldots,x)$ has depth
\begin{align*}
\OO\left(n\log_2\frac{2C_fsn}{\epsilon} + n^2 + 1\right) = \OO\left(n\log_2\frac{2C_fsn}{\epsilon} + n^2\right),
\end{align*}
and size
\begin{align*}
\OO\left(n\log_2\frac{2C_fsn}{\epsilon} + n^2 + n\right) = \OO\left(n\log_2\frac{2C_fsn}{\epsilon} + n^2\right) .
\end{align*}
Using $n=\OO\left(\frac{1}{\log_2s}\log_2\frac{2C_f}{\epsilon}\right)$, this gives depth and size
\begin{align*}
\OO\left(\left(\frac{1}{\log_2s}\log_2\frac{2C_f}{\epsilon}\right)\log_2\left(\frac{2C_fs}{\epsilon}\frac{1}{\log_2s}\log_2\frac{2C_f}{\epsilon}\right) + \frac{1}{\log_2^2s}\log_2^2\frac{2C_f}{\epsilon}\right)
= \OO\left(\frac{1}{\log_2^2s}\log_2^2\frac{C_f}{\epsilon}\right).
\end{align*}
The proof is complete.
\end{proof}

Our result below could be generalized to multiple dimensions, which would be interesting future work. 
In \cite{e2018}, it was shown that deep ReLU networks can approximate multivariate analytic functions with exponential convergence, a result similar to our theorem above.
However, we would like to emphasize that it is not possible to directly apply the result in \cite{e2018} to prove our main theorem in Section 3, because it is only valid on an open interval contained in $[-1,1]$, instead of an arbitrary closed interval $[-M,M]$. 
 
Let us now highlight that, in general, the constants $s$ and $C_f$ depend on $M$.
Let us look at two examples, a function with a singularity on the imaginary axis and an \textit{entire} function (\textit{i.e.}, a function that is analytic over the whole complex plane).
A typical example of an analytic function with singularities on the imaginary axis is the Runge-like function $f(x) = 1/(1 + \frac{x^2}{\beta^2})$, $\beta>1$, whose singularities are located at $x=\pm i\beta$.
The function $f$ is analytic on the interval $[-M,M]$ and analytically continuable to the open Bernstein $s$-ellipse $E_s^M$ with
\begin{align*}
s(M) = \frac{\sqrt{(4M^2-2)r^2 + r^4 +1} + r^2 - 1}{2Mr}
\end{align*}
and $r = \beta + \sqrt{\beta^2 + 1}$. Since $f$ increases along the imaginary axis we may take
\begin{align*}
C_f(M) = f\left(M\frac{s(M) - s(M)^{-1}}{2}\right).
\end{align*}

The complex exponential $f(x)=e^{ix}$ is an entire function.
Hence, any $s>1$ works but $C_f(s,M)$ must grow with $s$ and $M$. 
As $f$ increases along the imaginary axis we may choose
\begin{align}
C_f(s,M) = f\left(M\frac{s - s^{-1}}{2}\right) = e^{M\frac{s-s^{-1}}{2}}.
\label{eq:exponential}
\end{align}
In this case the network of Thm.~\ref{thm:analytic} has depth and size
\begin{align*}
\OO\left(\frac{1}{\log_2^2s}\left(M\frac{s-s^{-1}}{2} + \log_2\frac{1}{\epsilon}\right)^2\right).
\end{align*}

We would also like to mention that the ReLU activation function is not an optimal choice for constructing neural networks to approximate smooth functions. 
For example, Thm.~2.3 of \cite{mhaskar1996} shows that one-hidden-layer shallow networks with $\OO\left(\log\left(\frac{1}{\epsilon}\right)\right)$ parameters can approximate analytic functions with $\epsilon$ accuracy when a smooth activation function is used. 
The disadvantage of the ReLU activation function in this scenario is not unexpected since it is not a natural choice to use a function that is not differentiable to approximate a smooth function. 
However, from the point of view of deep learning and optimization, ReLU is a much better choice \cite{e2018}. 
The study in this paper should be regarded as a complement to existing approximation theory, using a more modern approach.

\section{Approximation of generalized bandlimited functions by deep ReLU networks}\label{sec:theorem}

A famous theorem of Carath\'{e}odory states that if a point $x\in\R^d$ lies in the the convex hull of a set $P$, then $x$ can be written as the convex combination of at most $d+1$ points in $P$.
Maurey's theorem \cite{pisier1981} is an extension of Carath\'{e}odory's result to the infinite-dimensional case.
It was used in the context of shallow network approximations by Barron in 1993 \cite{barron1993}. 
We recall Maurey's theorem below.

\begin{theorem}[Maurey's theorem]\label{thm:maurey}
Let $H$ be a Hilbert space with norm $\Vert\cdot\Vert$. 
Suppose there exists $G\subset H$ such that for every $g\in G$, $\Vert g\Vert\leq b$ for some $b>0$. 
Then, for every $f$ in the convex hull of $G$ and every integer $n\geq 1$, there is a $f_n$ in the convex hull of $n$ points in $G$ and a constant $c>b^2-\Vert f\Vert^2$ such that $\Vert f - f_n\Vert^2\leq \frac{c}{n}$.
\end{theorem}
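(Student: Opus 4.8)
The plan is to use a probabilistic (expectation) argument: draw $n$ i.i.d.\ points from $G$ according to the measure witnessing that $f$ lies in the convex hull, average them, and show the expected squared error is at most $\frac{b^2-\Vert f\Vert^2}{n}$; then a point achieving at most the expectation must exist. First I would write $f=\sum_j \lambda_j g_j$ (or, in full generality, $f=\int_G g\,d\nu(g)$ for a probability measure $\nu$ supported on $G$), using the definition of the convex hull. Let $\xi_1,\ldots,\xi_n$ be i.i.d.\ with law $\nu$, so that $\mathbb{E}[\xi_i]=f$ in the Bochner sense, and set $f_n=\frac1n\sum_{i=1}^n\xi_i$, which by construction lies in the convex hull of the $n$ points $\xi_1,\ldots,\xi_n\in G$.

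The core computation is the variance estimate. Expanding,
\begin{align}
\mathbb{E}\,\Vert f-f_n\Vert^2
= \mathbb{E}\,\Bigl\Vert \tfrac1n\sum_{i=1}^n(\xi_i-f)\Bigr\Vert^2
= \tfrac1{n^2}\sum_{i,j}\mathbb{E}\,\langle \xi_i-f,\ \xi_j-f\rangle.
\end{align}
By independence the cross terms $i\neq j$ vanish since $\mathbb{E}[\xi_i-f]=0$, leaving $\frac1{n^2}\sum_i \mathbb{E}\,\Vert \xi_i-f\Vert^2 = \frac1n\,\mathbb{E}\,\Vert \xi_1-f\Vert^2$. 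Then $\mathbb{E}\,\Vert \xi_1-f\Vert^2 = \mathbb{E}\,\Vert\xi_1\Vert^2 - \Vert f\Vert^2 \leq b^2-\Vert f\Vert^2$, using $\Vert g\Vert\leq b$ for all $g\in G$. Hence $\mathbb{E}\,\Vert f-f_n\Vert^2 \leq \frac{b^2-\Vert f\Vert^2}{n}$, and since this is an average over realizations, there exists at least one choice of $\xi_1,\ldots,\xi_n$ — i.e.\ an actual $f_n$ in the convex hull of $n$ points of $G$ — with $\Vert f-f_n\Vert^2\leq \frac{b^2-\Vert f\Vert^2}{n}\le\frac{c}{n}$ for any $c>b^2-\Vert f\Vert^2$.

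The main obstacle is mostly a matter of rigor rather than difficulty: one must justify the Bochner-integral manipulations (existence of $\mathbb{E}[\xi_i]$ in $H$, linearity of expectation through the inner product, Fubini-type exchange) and, in the fully general infinite convex hull case, the existence of the representing probability measure $\nu$ on $G$ (closure issues — working with the closed convex hull, or approximating $f$ by finite convex combinations first and passing to the limit). A cleaner route that sidesteps measure-theoretic subtleties is to assume first that $f=\sum_{j=1}^{m}\lambda_j g_j$ is a finite convex combination, run the sampling argument over the finite index set $\{1,\ldots,m\}$ with probabilities $\lambda_j$, obtain the bound, and then note the general case follows by density since the estimate is uniform in $m$. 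Either way the variance computation above is the heart of the argument.
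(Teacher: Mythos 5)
The paper does not prove this statement: Theorem~\ref{thm:maurey} is quoted from \cite{pisier1981} and used as a black box, so there is no in-paper argument to compare against. Your proposal is the standard proof (Maurey's empirical method, i.e.\ the approximate Carath\'{e}odory argument) and it is correct: sample $n$ i.i.d.\ points from the representing convex combination, average, note that the centered cross terms vanish, and bound $\mathbb{E}\Vert f-f_n\Vert^2$ by $\frac{1}{n}\left(b^2-\Vert f\Vert^2\right)$, so some realization does at least as well. Two minor remarks: since ``convex hull'' in the statement means finite convex combinations, the finite-index sampling version already proves the statement exactly as written and the Bochner-integral caveats are unnecessary --- although the paper in fact applies the theorem to an $f$ in the \emph{closure} of the convex hull of $G$, so the limiting argument you sketch at the end is the step that actually matters for the application in Theorem~\ref{thm:bandlimited}; and in the complex Hilbert space used there one should expand $\Vert\xi-f\Vert^2=\Vert\xi\Vert^2-2\,\mathrm{Re}\langle\xi,f\rangle+\Vert f\Vert^2$, which changes nothing since $\mathbb{E}[\xi]=f$.
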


We are now ready to prove our main theorem about the approximation of generalized bandlimited functions of the form \eqref{eq:bandlimited} by deep ReLU networks. {Let us first define a Hilbert space of such functions.}

\begin{definition}[Generalized bandlimited functions]
{Let $d\geq 2$ be an integer, $M\geq 1$ be a scalar, and $B=[0,1]^d$. Suppose $K:\R\rightarrow\C$ is analytic and bounded by a constant $D_K\in(0,1]$ on $[-dM,dM]$, and that $K$ satisfies the assumption of Thm.~$\ref{thm:analytic}$ for some $s>1$ and $C_K>0$. We define the Hilbert space $\mathcal{H}_{K,M}(B)$ of generalized bandlimited functions via}
\begin{align*}
\mathcal{H}_{K,M}(B)=\left\{ f(\bs{x})=\int_{[-M,M]^d}F(\bs{w})K(\bs{w}\cdot\bs{x})d\bs{w}\; \bigg\arrowvert \; F:[-M,M]^d\rightarrow\C\text{ is in }L^2([-M,M]^d) \right\},
\end{align*}
with inner product $\langle f, g\rangle_{\mathcal{H}_{K,M}(B)}:=\int_{[-M,M]^d} F_f(\bs{w}) \overline{F}_g(\bs{w}) d\bs{w}$ and norm $\|f\|_{\mathcal{H}_{K,M}(B)}:=\|F_f\|_{L^2([-M,M]^d)}$, where
\begin{align*}
 F_f=\arg\min_{F\in S_f} \|F\|_{L^2([-M,M]^d)}, \quad S_f=\bigg\{F\;\bigg\arrowvert\; f(\bs{x})=\int_{[-M,M]^d}F(\bs{w})K(\bs{w}\cdot\bs{x})d\bs{w}\bigg\}.
\end{align*}

\end{definition}
{Note that}
\begin{align*}
|f(\bs{x})|\leq D_K \int_{[-M,M]^d} |F_f(\bs{w})|d\bs{w} \leq (2M)^{d/2}\|F_f\|_{L^2([-M,M]^d)} = (2M)^{d/2} \|f\|_{\mathcal{H}_{K,M}(B)}.
\end{align*}
{The above inequality shows that if we consider an evaluation functional $L_{\bs{x}}$ defined on $\mathcal{H}_{K,M}(B)$ as follows,}
\begin{align*}
f(\bs{x})=L_{\bs{x}}(f):=\int_{[-M,M]^d}F_f(\bs{w})K(\bs{w}\cdot\bs{x})d\bs{w},
\end{align*}
{then $L_{\bs{x}}$ is bounded on $\mathcal{H}_{K,M}(B)$. Hence, $\mathcal{H}_{K,M}(B)$ is a RKHS; it contains the space of bandlimited functions, which correponds to $K(t)=e^{it}$. For simplicity, we will use $F$ instead of $F_f$ for $f\in\mathcal{H}_{K,M}(B)$, when the dependency on $f$ is clear.}

\begin{theorem}[Deep networks for $\mathcal{H}_{K,M}$]\label{thm:bandlimited}
{Suppose $f$ is an arbitrary real-valued function in $\mathcal{H}_{K,M}(B)$, for some function $K$, scalars $M\geq 1$, $s>1$ and $C_K>0$, and integer $d\geq 2$. Let us assume that $\int_{\R^d}\vert F(\bs{w})\vert d\bs{w} = \int_{[-M,M]^d}\vert F(\bs{w})\vert d\bs{w} = C_F$. Then, for any measure $\mu$ and any scalar $0<\epsilon<1$, there exists a deep ReLU network $\wff$ with inputs $\bs{x}\in B=[0,1]^d$, that has depth}
\begin{align*}
L=\OO\left(\frac{1}{\log_2^2s}\log_2^2\frac{C_FC_K\sqrt{\mu(B)}}{\epsilon}\right),
\end{align*}
and size 
\begin{align*}
W=\OO\left(\frac{C_F^2\mu(B)}{\epsilon^2\log_2^2s}\log_2^2\frac{C_FC_K\sqrt{\mu(B)}}{\epsilon}\right),
\end{align*}
such that
\begin{align*}
\left\Vert\wff - f\right\Vert_{L^2(\mu, B)} = \sqrt{\int_B\vert\wff(\bs{x}) - f(\bs{x})\vert^2 d\mu(\bs{x})}\leq\epsilon.
\end{align*}
\end{theorem}

\begin{proof}
Let $f$ be an arbitrary function in $\mathcal{H}_{K,M}$, and $\mu$ be an arbitrary measure. Let $F(\bs{w})=\vert F(\bs{w})\vert e^{i\theta(\bs{w})}$.
Since $f$ is real-valued, we may write
\begin{align*}
f(\bs{x}) & = \mrm{Re}\,\Bigg(\int_{\R^d}F(\bs{w})K(\bs{w}\cdot\bs{x})d\bs{w}\Bigg), \\
& =  \mrm{Re}\,\Bigg(\int_{\R^d}C_F e^{i\theta(\bs{w})}K(\bs{w}\cdot\bs{x})\frac{\vert F(\bs{w})\vert}{C_F} d\bs{w}\Bigg), \\
& = \int_{[-M,M]^d}C_F\Bigg[\cos(\theta(\bs{w}))K_R(\bs{w}\cdot\bs{x})-\sin(\theta(\bs{w}))K_I(\bs{w}\cdot\bs{x})\Bigg]\frac{\vert F(\bs{w})\vert}{C_F} d\bs{w},
\end{align*}
{where $K_R(\bs{w}\cdot\bs{x})=\mrm{Re}(K(\bs{w}\cdot\bs{x}))$ and $K_I(\bs{w}\cdot\bs{x})=\mrm{Im}(K(\bs{w}\cdot\bs{x}))$.} The integral above represents $f$ as an infinite convex combination of functions in the set
\begin{align*}
G_{K,M}  = \Big\{\gamma\big[\cos(\beta)\mrm{Re}(K(\bs{w}\cdot\bs{x}))-\sin(\beta)\mrm{Im}(K(\bs{w}\cdot\bs{x}))\big],\,\vert\gamma\vert\leq C_F,\,\beta\in\R,\,\bs{w}\in[-M,M]^d\Big\}.
\end{align*}
Therefore, $f$ is in the closure of the convex hull of $G_{K,M}$.
Since functions in $G_{K,M}$ are bounded in the $L^2(\mu,B)$-norm by $2C_FD_K\sqrt{\mu(B)}\leq2C_F\sqrt{\mu(B)}$, Theorem \ref{thm:maurey} tells us that there exist real coefficients $b_j$'s and $\beta_j$'s such that\footnote{We use Theorem \ref{thm:maurey} with $b=2C_F\sqrt{\mu(B)}$, $c=b^2>b^2 - \Vert f\Vert^2$, and $\Vert\cdot\Vert=\Vert\cdot\Vert_{L^2(\mu, B)}$.}
\begin{align*}
f_{\epsilon_0}(\bs{x}) 
= \sum_{j=1}^{\ceil{1/\epsilon_0^2}}b_j\big[\cos(\beta_j)K_R(\bs{w}\cdot\bs{x}) - \sin(\beta_j)K_I(\bs{w}\cdot\bs{x})\big],\quad\sum_{j=1}^{\ceil{1/\epsilon_0^2}}\vert b_j\vert \leq C_F,
\end{align*}
for some $0<\epsilon_0<1$ to be determined later, such that
\begin{align*}
\left\Vert f_{\epsilon_0}(\bs{x}) - f(\bs{x})\right\Vert_{L^2(\mu, B)}\leq 2C_F\sqrt{\mu(B)}\epsilon_0.
\end{align*}

We now approximate $f_{\epsilon_0}(\bs{x})$ by a deep ReLU network $\wff(\bs{x})$. {Note that $K_R$ and $K_I$ are both analytic and satisfy the same assumptions as $K$. Using Theorem} \ref{thm:analytic}{, they can be approximated to accuracy $\epsilon_0$ using networks $\wKK_R$ and $\wKK_I$ of depth and size}
\begin{align*}
\OO\left(\frac{1}{\log_2^2s}\log_2^2\frac{C_K}{\epsilon_0}\right).
\end{align*}
We define the deep ReLU network $\wff(\bs{x})$ by
\begin{align*}
\wff(\bs{x}) = \sum_{j=1}^{\ceil{1/\epsilon_0^2}}b_j\big[\cos(\beta_j)\wKK_R(\bs{w}\cdot\bs{x}) - \sin(\beta_j)\wKK_I(\bs{w}\cdot\bs{x})\big].
\end{align*}
This network has depth $L=\OO\left(\frac{1}{\log_2^2s}\log_2^2\frac{C_K}{\epsilon_0}\right)$ and size $W=\OO\left(\frac{1}{\epsilon_0^2\log_2^2s}\log_2^2\frac{C_K}{\epsilon_0}\right)$, and
\begin{align*}
\vert \wff(\bs{x}) - f_{\epsilon_0}(\bs{x})\vert 
& \leq\sum_{j=1}^{\ceil{1/\epsilon_0^2}}\vert b_j\vert\vert\wKK_R(\bs{w}_j\cdot \bs{x})-K_R(\bs{w}_j\cdot\bs{x})\vert
+ \sum_{j=1}^{\ceil{1/\epsilon_0^2}}\vert b_j\vert\vert\wKK_I(\bs{w}_j\cdot \bs{x})-K_I(\bs{w}_j\cdot\bs{x})\vert\leq 2C_F\epsilon_0,
\end{align*}
which yields
\begin{align*}
\left\Vert\wff(\bs{x}) - f_{\epsilon_0}(\bs{x})\right\Vert_{L^2(\mu, B)}\leq 2C_F\sqrt{\mu(B)}\epsilon_0.
\end{align*}

The total approximation error satisfies
\begin{align*}
\left\Vert\wff(\bs{x}) - f(\bs{x})\right\Vert_{L^2(\mu, B)}\leq 4C_F\sqrt{\mu(B)}\epsilon_0.
\end{align*}
We take 
\begin{align*}
\epsilon_0=\frac{\epsilon}{4C_F\sqrt{\mu(B)}}
\end{align*}
to complete the proof.
\end{proof}

Let us end this section with comments on the constants $C_F$, $C_K$ and $\mu(B)$; we start with $C_F$.
If $F$ is a mollifier then $C_F=1$, whereas if $F$ is a normal distribution truncated to $[-M,M]^d$ then $C_F<1$.
In general, however, $C_F$ might grow algebraically or exponentially with the dimension $d$.

We continue with $C_K$. 
Consider for example the complex exponential kernel $K(t)=e^{it}$, $t\in[-dM,dM]$.
Equation \ref{eq:exponential} yields
\begin{align*}
C_K(s,dM)=e^{dM\frac{s-s^{-1}}{2}}, \quad \text{for any $s>1$}.
\end{align*}
The resulting network to approximate a function to accuracy $\epsilon$ in the $L^2(\mu,B)$-norm with such a kernel 
has depth
\begin{align*}
L=\OO\left(\frac{1}{\log_2^2s}\left(dM\frac{s-s^{-1}}{2} + \log_2\frac{C_F\sqrt{\mu(B)}}{\epsilon}\right)^2\right),
\end{align*}
and size
\begin{align*}
W=\OO\left(\frac{C_F^2\mu(B)}{\epsilon^2\log_2^2s}\left(dM\frac{s-s^{-1}}{2} + \log_2\frac{C_F\sqrt{\mu(B)}}{\epsilon}\right)^2\right).
\end{align*}

We conclude with $\mu(B)$.
If $\mu$ is a probability measure, then $\mu(B)\leq1$ for any compact domain $B$.
If $\mu$ is Lebesgue measure, then $\mu(B)=1$ for $B=[0,1]^d$, but grows exponentially with the dimension $d$ if $B=[0,\ell]^d$, $\ell>1$. 
This is a common drawback in the approximation theory of neural networks for conquering the curse of dimensionality \cite{barron1993}. 

\section{Discussion}

We have proven new upper bounds for the approximation of bandlimited functions of the form \eqref{eq:bandlimited}, for which the curse of dimensionality is overcome.
Our proof is based on Maurey's theorem and on the ability of deep ReLU networks to approximate Chebyshev polynomials and analytic functions efficiently.

There are many ways in which this work could be profitably continued.
The space of bandlimited functions is a type of RKHS and therefore a possible extension would be to look at different types of RKHS. 
One could also relax the bandlimited assumption \eqref{eq:bandlimited}, e.g., to functions $F$ whose derivatives are rapidly decreasing.
In this case, the kernel $K$ could be approximated on the real line by Chebyshev polynomials on truncated intervals or Hermite polynomials.
The latter is another example of classical orthogonal polynomials, which can be represented by a three-term recurrence relation similar to \eqref{eq:recurrence} and efficiently implemented by deep ReLU networks.

Let us conclude this paper with a comment on deep versus shallow networks in the context of parallel computing.
Since the depth $L$ grows like $\OO\left(\log_2^2\frac{1}{\epsilon}\right)$ in Theorem \ref{thm:bandlimited}, the approximation accuracy for deep networks can be root-exponentially improved if $L$ increases. 
Hence, very deep networks are more efficient than shallow networks when both parallel computing efficiency and approximation efficiency are considered.
This is in contrast with the more general case of continuous functions, the approximation of which via very deep networks might be less attractive in terms of parallel computing \cite{shen2019}.

\section*{Acknowledgements}

We thank the members of the CM3 group at Columbia University and our colleague Mikael Slevinsky for fruitful discussions.
The first author is much indebted to former PhD supervisor Nick Trefethen for his inspirational contributions to numerical analysis and in particular to the field of approximation theory.

The research of the second author was partially supported by US NSF under the grant award DMS-1945029.

The research of the third author is supported in part by US NSF DMS-1719699 and the NSF TRIPODS program CCF-1704833.

\bibliography{references.bib}

\end{document}